\documentclass{amsart}
\usepackage{amsfonts}

\usepackage{eurosym}
\usepackage{amsmath}
\usepackage{amssymb}
\usepackage[all]{xy}
\usepackage{tikz-cd}
\usepackage{amsthm}
\usepackage[square,sort,comma,numbers]{natbib}
\usepackage{hyperref}

\setcounter{MaxMatrixCols}{10}

\theoremstyle{plain}
\newtheorem{thm}{Theorem}[section]
\newtheorem{lem}{Lemma}[section]
\theoremstyle{definition}
\newtheorem{defn}{Definition}[section]
\newtheorem*{defn*}{Definition}

\begin{document}
\title[The Fra\"iss\'e limit of matrix algebras with the
rank metric]{The Fra\"iss\'e limit of matrix algebras
with the rank metric}
\author{Aaron Anderson}
\address{Mathematics Department\\
California Institute of Technology\\
1200 E. California Blvd\\
MC 253-37\\
Pasadena, CA 91125}
\email{awanders@caltech.edu}
\urladdr{https://www.math.ucla.edu/~aaronanderson/}
\thanks{A.A. was supported by Caltech's Summer Undergraduate Research Fellowships (SURF) program and by a Rose Hills Summer Undergraduate Research Fellowship. We would also like to thank Martino Lupini for mentorship and guidance throughout this project.}
\subjclass[2000]{Primary 16E50, 03C30; Secondary 03C98}
\keywords{von Neumann regular ring, rank metric, Fra\"iss\'e class, Fra\"iss\'e limit}
\dedicatory{}

\begin{abstract}
We realize the $\mathbb{F}_q$-algebra $M(\mathbb{F}_q)$ studied by von
Neumann and Halperin as the Fra\"iss\'e limit of the class of
finite-dimensional matrix algebras over a finite field $\mathbb{F}_q$
equipped with the rank metric. We then provide a new Fra\"iss\'e-theoretic
proof of uniqueness of such an object. Using the results of Carderi and
Thom, we show that the automorphism group of $\mathrm{Aut}(\mathbb{F}_q )$
is extremely amenable. We deduce a Ramsey-theoretic property for the class
of algebras $M(\mathbb{F}_q)$, and provide an explicit bound for the
quantities involved. 
\end{abstract}

\maketitle
\section{Introduction}

Fix $q$, a prime power. Let $\mathcal{K}(\mathbb{F}_q)=\{M_n(\mathbb{F}_q):n
\in \mathbb{N}\}$. If $m | n$, then let $\iota_{n,m}$ be the embedding of $%
M_m(\mathbb{F}_q)$ into $M_n(\mathbb{F}_q)$, given by $\iota_{n,m}(x)=x
\otimes 1_{n/m}$. If $n_0,n_1,\dots$ satisfies $n_k|n_{k+1}$ and $\lim_{k
\to \infty} n_k=\infty$, then we call $n_0,n_1,\dots$ a \textit{factor
sequence}, and 
\begin{equation*}
M_{n_0}(\mathbb{F}_q) \overset{\iota_{n_1,n_0}}{\hookrightarrow }M_{n_1}(%
\mathbb{F}_q) \overset{\iota_{n_2,n_1}}{\hookrightarrow }\dots
\end{equation*}
is an inductive sequence of $\mathbb{F}_q$-algebras, so it has a direct
limit, which we call $M_0(\mathbb{F}_q)$. Let $\iota_{n_k}$ be the
corresponding inclusion of $M_{n_k}(\mathbb{F_q})$ into $M_0(\mathbb{F}_q)$.
On each $M_n(\mathbb{F}_q)$, we can define a metric, $d(x,y)=\frac{\mathrm{%
rank}(x-y)}{n}$. Under these metrics, each inclusion $\iota_{n,m}$ is an
isometry, so a metric is induced on $M_0(\mathbb{F}_q)$. Let $M(\mathbb{F}_q)
$ be the completion of $M_0(\mathbb{F}_q)$ under this metric, which is also
an $\mathbb{F}_q$-algebra. In a manuscript eventually reworked and published
by his student Halperin \cite{Halperin3}, von Neumann showed that $M(\mathbb{%
F}_q)$ is uniquely defined, that is, it does not depend on the choice of
factor sequence.

In classical model theory, a Fra\"iss\'e class $\mathcal{K}$ is a collection
of finitely-generated structures (or isomorphism classes thereof) in a given
language, satisfying a few additional properties, which guarantee the
existence and uniqueness of a Fra\"iss\'e limit associated with the class.
The Fra\"iss\'e limit is a countably-generated structure $F$ such that for
any structure $A \in \mathcal{K}$, any isomorphism between substructures $%
A,B\in \mathcal{K}$ of $F$ can be extended to an automorphism of $F$, a
property known as $\mathcal{K}$-homogeneity \cite{Hodges}. This theory
carries over to model theory of metric structures, where the limit need only
be approximately $\mathcal{K}$-homogeneous \cite{begnac}. In Section~\ref%
{sec:class}, we show that in the language of $\mathbb{F}_q$-algebras with a
metric, the class $\mathcal{K}(\mathbb{F}_q)$ is a Fra\"iss\'e class, and in
Section~\ref{sec:limit}, we show that $M(\mathbb{F}_q)$ is its Fra\"iss\'e
limit. We also use give a direct Fra\"iss\'e-theoretic proof of the
uniqueness of $M(\mathbb{F}_q)$, originally established by von Neumann with
a different argument. 

Because of the added structure of the rank metric, there only exists an
embedding from $M_{m}(\mathbb{F}_{q})$ into $M_{n}(\mathbb{F}_{q})$ if $m$
divides $n$. However, in Sections \ref{sec:limit} and \ref{sec:exam}, we
wish to approximate embeddings from $M_{m}(\mathbb{F}_{q})$ into $M(\mathbb{F%
}_{q})$ by embeddings from $M_{m}(\mathbb{F}_{q})$ into some finite stage of
the direct limit defining $M(\mathbb{F}_{q})$, say, $M_{n_{k}}(\mathbb{F}%
_{q})$ for some $k$. To do this, unless some $n_{k}$ is a multiple of $m$,
we must instead approximate the embedding with approximate embeddings. In
Section \ref{sec:lem} we consider a natural notion of approximate embedding,
and show that any embedding  into $M(\mathbb{F}_{q})$ can be approximated
arbitrarily well by these approximate embeddings into finite stages of the
limit. In order to establish this fact, we consider a presentation of $M_{n}(%
\mathbb{F}_{q})$ in terms of generators and relations studied by Kassabov in 
\cite{matrixgen}, and prove that the defining relations are \emph{stable }%
with respect to the rank metric. Stability problems for relations in metric
groups and operator algebras have been estensively studied, also due to
their connections with notions such as (linear) soficity and hyperlinearity
in group theory, and (weak) semiprojectivity and $\mathcal{R}$-embeddability
in operator algebras.

The Kechris-Pestov-Todorcevic correspondence establishes an equivalence
between a \textit{Ramsey property} of a Fra\"iss\'e class and \textit{%
extreme amenability} of the automorphism group of its Fra\"iss\'e limit. The
Ramsey property is a generalization of the Ramsey theorem, reducing to the
standard Ramsey theorem for the Fra\"iss\'e class of finite linear
orderings. Extreme amenability pertains to the topological dynamics of the
group: a topological group $G$ is extremely amenable when any continuous
action of $G$ on a compact Hausdorff space $X$ leaves some point fixed \cite%
{kpt}. This too carries over to metric Fra\"iss\'e structures, but again,
the Ramsey property is only approximate \cite{melleraytsankov}. In Section~%
\ref{sec:exam}, we reduce the extreme amenability of $\mathrm{Aut}(M(\mathbb{%
F}_q))$ to the extreme amenability of the unit group of $M(\mathbb{F}_q)$,
proven by Carderi and Thom \cite{carderithom}.

It seems worth mentioning that the study of natural limiting objects of
finite-dimensional matrix algebras has also connections with  computer
science and applied graph theory. In \cite{fractalgraphs}, the authors study
Kronecker graphs, which are constructed by taking repeated tensor products
of the adjacency matrices of graphs. By taking the tensor product
sufficiently many times, one can construct a graph which is approximately
self-similar, a process suitable for modelling fractal structures which
appear in nature, or graphs such as social networks. However, in order to
create a graph which would have genuine fractal structure, one would need to
take the tensor product of an infinite sequence of matrices, which would no
longer be a well-defined matrix. Such an object does however exist, as the
limit of a Cauchy sequence of partial products, in the algebra $M(\mathbb{F}%
_{q})$, so it may be possible to gain new insight into fractal graphs by
studying this algebra further.

\section{$\mathcal{K}(\mathbb{F}_q)$ is a Fra\"iss\'e class}

\label{sec:class}

\begin{defn}
Let $\mathcal{K}$ be a class of finitely-generated metric structures in a
particular language. $\mathcal{K}$ is a metric Fra\"iss\'e class \cite{Eagle}
if and only if the following properties are satisfied: 

\begin{itemize}
\item Joint Embedding Property (JEP): For any $A,B \in \mathcal{K}$, there
exists some $C \in \mathcal{K}$ such that $A$ and $B$ both embed into $C$. 

\item Near Amalgamation Property (NAP): For any $A,B_0,B_1 \in \mathcal{K}$,
embeddings $\phi_i: A \hookrightarrow B_i$, and $\varepsilon>0$, there
exists some $C \in \mathcal{K}$ with embeddings $\psi_i: B_i \hookrightarrow
C$ such that $d(\psi_0 \circ \phi_0,\psi_1 \circ \phi_1)<\varepsilon$. 

\item Weak Polish Property (WPP): For any class satisfying JEP and NAP, we
can define, for each $n \in \mathbb{N}$, a class $\mathcal{K}_n$ of
structures in $\mathcal{K}$, with specified generating tuples of size at
most $n$. We then define a pseudometric on $\mathcal{K}_n$ (relying on JEP
and NAP) by  
\begin{equation*}
d_n(\bar a,\bar b)=\inf d(\phi(\bar a),\psi(\bar b))
\end{equation*}
where $\phi:\langle \bar a\rangle \hookrightarrow C, \psi:\langle \bar b
\rangle \hookrightarrow C$ are embeddings into the same structure $C \in 
\mathcal{K}$. The WPP is true when each of these pseudometrics $d_n$ is
separable. 
\end{itemize}
\end{defn}

We will now verify that $\mathcal{K}(\mathbb{F}_q)$ satisfies each of these
properties, and is thus a Fra\"iss\'e class, implying the existence of a
unique Fra\"iss\'e limit.

\paragraph{Joint Embedding Property}

If $m | n$, then let $\iota_{n,m}$ be the embedding of $M_m(\mathbb{F}_q)$
into $M_n(\mathbb{F}_q)$, given by $\iota_{n,m}(x)=x \otimes 1_{n/m}$ \cite%
{Halperin2}.

Let $A$ and $B$ be structures in $\mathcal{K}(\mathbb{F}_q)$, that is, $%
A=M_a(\mathbb{F}_q)$ and $B=M_b(\mathbb{F}_q)$. Then if $C=M_{ab}(\mathbb{F}%
_q)$, there exists an embedding $\iota_{ab,a}: A \hookrightarrow C$, and an
embedding $\iota_{ab,b}: B \hookrightarrow C$, so $A$ and $B$ can be jointly
embedded into $C$.

\paragraph{Amalgamation Property}

In this case, $\mathcal{K}(\mathbb{F}_q)$ satisfies not only the Near
Amalgamation Property, but the same amalgamation property as discrete
structures, allowing us to dispense with the $\varepsilon$: for any $%
A,B_0,B_1 \in \mathcal{K}$, embeddings $\phi_i: A \hookrightarrow B_i$,
there exists some $C \in \mathcal{K}$ with embeddings $\psi_i: B_i
\hookrightarrow C$ such that $\psi_0 \circ \phi_0=\psi_1 \circ \phi_1$.

Let $A,B_0,B_1$ be structures in $\mathcal{K}(\mathbb{F}_q)$, with
embeddings $\phi_i:A \to B_i$. Let $A=M_a(\mathbb{F}_q)$ and $B_i=M_{b_i}(%
\mathbb{F}_q)$. As $A,B_i$ are matrix algebras over $\mathbb{F}_q$, and thus
finite-dimensional central simple algebras over $\mathbb{F}_q$, the
Skolem-Noether Theorem \cite{Farb} shows that each embedding $\phi_i: A
\hookrightarrow B_i$ must be a composition of $\iota_{b_i,a}$ with an inner
automorphism of $B_i$, given by conjugating by some unit $y_i \in B_i^*$.
Thus we may assume without loss of generality that each $\phi_i=\iota_{b_i,a}
$.

Thus if $C=M_{c}(\mathbb{F}_q)$, where $b_0,b_1$ both divide $c$, we can use
the automorphisms $\iota_{c,b_i}$ to make the following diagram commute: 
\begin{equation*}
\xymatrix{
	 & M_c(\mathbb{F}_q)& \\
	M_{b_0}(\mathbb{F}_q)\ar[ur]^{\iota_{c,b_0}} & & M_{b_1}(\mathbb{F}_q)\ar[ul]_{\iota_{c,b_1}} \\
	 & M_a(\mathbb{F}_q)\ar[ul]^{\iota_{b_0,a}} \ar[ur]_{\iota_{b_1,a}}&
}
\end{equation*}
This commutes because for any $i,j,k$, 
\begin{equation*}
\iota_{ijk,ij}\circ \iota_{ij,i}(x)=\iota_{ij,i}(x)\otimes 1_k=x \otimes
1_j\otimes 1_k = x \otimes 1_{jk}=\iota_{ijk,i}(x)
\end{equation*}
and thus $\iota_{b_0b_1,b_i} \circ \iota_{b_i,a} = \iota_{b_0b_1,a}$ for
each $i$.

\paragraph{Weak Polish Property}

There are only countably many structures in $\mathcal{K}(\mathbb{F}_q)$, and
each one is finite. Thus each $\mathcal{K}(\mathbb{F}_q)_n$ is countable,
and thus trivially separable.

\section{Stability Lemma}

\label{sec:lem}

\subsection{$\protect\delta$-embeddings}

\begin{defn}
Define a (not necessarily unital) homomorphism $\phi: M_m(\mathbb{F}_q) \to
M_n(\mathbb{F}_q)$ to be a \textit{$\delta$-embedding} when there exists
some unit $y \in M_n(\mathbb{F}_q)$ and some number $k$ such that for each $%
x \in M_m(\mathbb{F}_q)$, $\phi(x)=y(x^{\oplus k}\oplus 0^{n-mk})y^{-1}$,
and $\frac{mk}{n}\geq 1-\delta$.
\end{defn}

$\delta$-embeddings will be used as a proxy for actual embeddings, because
we cannot always guarantee that there will be an embedding $M_m(\mathbb{F}%
_q) \hookrightarrow M_n(\mathbb{F}_q)$, unless we know that $m$ divides $n$.
We can reconstruct an actual embedding by taking a limit of $\delta$%
-embeddings with $\delta$ decreasing to 0.

\subsection{Proving the Lemma}

\begin{lem}
\label{stablem}  Let $M(\mathbb{F}_q)$ be the completion of the direct limit
of the sequence $M_{n_0}(\mathbb{F}_q) \hookrightarrow M_{n_1}(\mathbb{F}%
_q)\hookrightarrow\dots$, and let $\phi: M_n(\mathbb{F}_q) \hookrightarrow M(%
\mathbb{F}_q)$ be an embedding. Then for each $\varepsilon, \delta>0$, there
exists some $N$ such that if $K\geq N$, if $n_K=mn+r$ with $0 \leq r < n$,
then there is a $\frac{r}{n_K}$-embedding $\psi: M_n(\mathbb{F}_q) \to
M_{n_K}(\mathbb{F}_q)$ such that $d(\iota_{n_K} \circ \psi,\phi)<\varepsilon$%
. In particular, if $n$ divides $n_K$, then $\psi$ is an embedding.
\end{lem}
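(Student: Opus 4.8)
The plan is to reduce the statement about $M_n(\mathbb{F}_q)$ to the stability of Kassabov's presentation of matrix algebras. First I would recall that presentation: $M_n(\mathbb{F}_q)$ is generated by a small number of elements (a permutation-type element realizing an $n$-cycle together with a rank-one idempotent, or similar) subject to finitely many relations that witness ``this is the algebra of $n\times n$ matrices.'' Given the embedding $\phi\colon M_n(\mathbb{F}_q)\hookrightarrow M(\mathbb{F}_q)$, the images $\phi(g_1),\dots,\phi(g_s)$ of these generators lie in the completion, so they can each be approximated in the rank metric, to within any $\eta>0$, by elements coming from some finite stage $M_{n_K}(\mathbb{F}_q)$; choosing $K$ large (this is the $N$ in the statement) we get elements $a_1,\dots,a_s\in M_{n_K}(\mathbb{F}_q)$ with $d(a_i,\phi(g_i))<\eta$.

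The heart of the argument is the \emph{stability} of Kassabov's relations with respect to the rank metric: if the $a_i$ satisfy Kassabov's defining relations up to error $\eta$ in the rank metric, then there are \emph{exact} solutions $b_1,\dots,b_s\in M_{n_K}(\mathbb{F}_q)$ to those relations with $d(a_i,b_i)<f(\eta)$ where $f(\eta)\to 0$ as $\eta\to 0$. Because the $b_i$ exactly satisfy the relations, they generate a subalgebra of $M_{n_K}(\mathbb{F}_q)$ isomorphic to $M_n(\mathbb{F}_q)$ (or, if $n\nmid n_K$, to a suitable corner / block-diagonal amplification of it — this is exactly where the $r$ appears). More precisely, the universal object defined by Kassabov's relations is $M_n(\mathbb{F}_q)$, and any homomorphism $M_n(\mathbb{F}_q)\to M_{n_K}(\mathbb{F}_q)$ is, up to conjugation by a unit, of the form $x\mapsto x^{\oplus k}\oplus 0^{n_K-nk}$ where $k=\lfloor n_K/n\rfloor$, so $n_K=nk+r$ with $0\le r<n$ and the resulting map is a $\tfrac{r}{n_K}$-embedding $\psi$. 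Composing with $\iota_{n_K}$ and estimating $d(\iota_{n_K}\circ\psi,\phi)$ on generators — then propagating to all of $M_n(\mathbb{F}_q)$ using that every element of $M_n(\mathbb{F}_q)$ is a fixed algebra-word in the generators and that the rank metric is subadditive and submultiplicative enough for word maps to be uniformly continuous — yields the bound $<\varepsilon$ once $\eta$ (hence $K$) is chosen small (hence large) enough. When $n\mid n_K$ we have $r=0$, so $\psi$ is a genuine embedding.

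The main obstacle I anticipate is proving the stability of Kassabov's relations in the rank metric with an \emph{explicit, dimension-independent} modulus $f$; this is the real content of Section~\ref{sec:lem}, and it is what lets us pick a single $N$ that works uniformly for all large $K$. Concretely, one has to take elements that almost satisfy ``$e$ is an idempotent of normalized rank $1/n$,'' ``$u$ is invertible of order $n$,'' ``$e, ueu^{-1},\dots,u^{n-1}eu^{-(n-1)}$ are orthogonal and sum to $1$,'' etc., and perturb them to exact solutions: one cleans up the almost-idempotent to a genuine idempotent of nearby rank by a small-rank correction, adjusts its rank to be an exact multiple of $n_K/n$ by adding or deleting a controlled number of rank-one pieces (contributing the $r/n_K$ defect), corrects the almost-unit to an honest unit implementing the cyclic shift of these idempotents, and checks that each correction moves things by rank $O(\eta)\cdot n_K$, i.e. by $O(\eta)$ in the normalized metric. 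A secondary, more bookkeeping-type obstacle is controlling the uniform continuity constant of the particular word map expressing a general matrix unit $e_{ij}$ in terms of the generators, so that $d(a_i,\phi(g_i))<\eta$ for the generators upgrades to $d(\psi(x),\phi(x))<\varepsilon$ for all $x$ with $\|x\|\le 1$ (equivalently, for $x$ ranging over matrix units); since the rank metric satisfies $\mathrm{rank}(xy)\le\min(\mathrm{rank}(x),\mathrm{rank}(y))$ and $\mathrm{rank}(x+y)\le\mathrm{rank}(x)+\mathrm{rank}(y)$, the relevant Lipschitz constants are polynomial in $n$ and independent of $K$, which is exactly what we need.
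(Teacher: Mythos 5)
Your overall reduction is the same as the paper's: pass to Kassabov-type generators, approximate their images $\phi(g_i)$ by elements of a finite stage $M_{n_K}(\mathbb{F}_q)$, invoke stability of the defining relations in the rank metric to get exact solutions nearby, pad to multiplicity $\lfloor n_K/n\rfloor$, and upgrade closeness on generators to closeness on all of $M_n(\mathbb{F}_q)$ via Lipschitz word maps (that last step, with constants depending only on $n$ and $q$, is fine). The gap is that the stability statement is exactly the content of the lemma, and you only assert it and sketch a strategy that you yourself label the main obstacle. The paper's proof \emph{is} a proof of that stability, and it does not proceed by globally perturbing an almost-idempotent and an almost-cyclic unit (indeed it uses a different presentation, $M_n(\mathbb{F}_p)=\langle a,b\mid a^n=b^n=0,\ ba+(p+1)a^{n-1}b^{n-1}=1\rangle$ with $a,b$ the two shift matrices, not an idempotent plus an $n$-cycle). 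Instead, writing $x,y\in M_{n_K}(\mathbb{F}_q)$ for the approximants, it intersects kernels of the defect operators to build a nested chain of subspaces $W_0\supset W_1\supset\cdots\supset W_{n-1}$, sets $V=W_{n-1}+yW_{n-1}+\cdots+y^{n-1}W_{n-1}$, shows the relations hold \emph{exactly} on $V$ with $\dim V\geq n_K\bigl(1-(4+n)n\delta\bigr)$, and concludes that on $V$ the pair $(x,y)$ is conjugate to $\bigl(a^{\oplus \dim V/n}\oplus 0,\ b^{\oplus \dim V/n}\oplus 0\bigr)$. If you want to run your idempotent/cyclic-unit version, you must actually prove the analogous facts over $\mathbb{F}_q$ (turning an almost-idempotent into an idempotent of controlled rank, and an almost-unit into a unit cyclically permuting an exact system of orthogonal idempotents, with modulus independent of $n_K$); these are genuine linear-algebra arguments, not formal corrections, and none of them is supplied.

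A second, smaller but substantive error: it is not true that every nonzero homomorphism $M_n(\mathbb{F}_q)\to M_{n_K}(\mathbb{F}_q)$ is conjugate to $x\mapsto x^{\oplus k}\oplus 0$ with $k=\lfloor n_K/n\rfloor$; the multiplicity can be any $k\leq\lfloor n_K/n\rfloor$, and the exact solutions produced by a stability argument will a priori only have multiplicity $\dim V/n$. Arranging the maximal multiplicity $m=\lfloor n_K/n\rfloor$, which is what makes $\psi$ an $\frac{r}{n_K}$-embedding, is a separate padding step (in the paper: extending $a^{\oplus \dim V/n}\oplus 0$ by extra copies of $a$ on the complement of $V$), and it is also where part of the error enters: one must check the padded map is still close to the approximants, via $d(x,x')\leq (n_K-\dim V)/n_K\leq (4+n)n\delta$. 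Your remark about adjusting the rank of the idempotent gestures at this but is not carried out, and as stated the multiplicity claim conceals both the need for the padding and its contribution to the bound $d(\iota_{n_K}\circ\psi,\phi)<\varepsilon$.
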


\begin{proof}
Fix $\varepsilon >0$. As established in \cite{matrixgen}, for a prime $p$, $%
M_n(\mathbb{F}_p)$ is the ring presented by the following generators and
relations 
\begin{equation*}
M_n(\mathbb{F}_p)= \langle a,b|a^n=b^n=0,ba+(p+1)a^{n-1}b^{n-1}=1\rangle
\end{equation*}
where $a$ and $b$ are the off-diagonal matrices

\begin{equation*}
\begin{bmatrix}
0 & \cdots & \cdots & \cdots & \cdots & \cdots & \cdots & 0 \\ 
1 & 0 &  &  &  &  &  & \vdots \\ 
0 & 1 & 0 &  &  &  &  & \vdots \\ 
\vdots & \ddots & \ddots & \ddots &  &  &  & \vdots \\ 
\vdots &  & \ddots & \ddots & \ddots &  &  & \vdots \\ 
\vdots &  &  & \ddots & 1 & 0 &  & \vdots \\ 
\vdots &  &  &  & \ddots & 1 & 0 & \vdots \\ 
0 & \cdots & \cdots & \cdots & \cdots & 0 & 1 & 0 \\ 
&  &  &  &  &  &  & 
\end{bmatrix}%
, 
\begin{bmatrix}
0 & 1 & 0 & \cdots & \cdots & \cdots & \cdots & 0 \\ 
\vdots & 0 & 1 & \ddots &  &  &  & \vdots \\ 
\vdots &  & 0 & 1 & \ddots &  &  & \vdots \\ 
\vdots &  &  & \ddots & \ddots & \ddots &  & \vdots \\ 
\vdots &  &  &  & \ddots & \ddots & \ddots & \vdots \\ 
\vdots &  &  &  &  & 0 & 1 & 0 \\ 
\vdots &  &  &  &  &  & 0 & 1 \\ 
0 & \cdots & \cdots & \cdots & \cdots & \cdots & \cdots & 0 \\ 
&  &  &  &  &  &  & 
\end{bmatrix}%
\end{equation*}

Choose $p$ to be the prime such that $p^k=q$. The set of matrices in $M_n(%
\mathbb{F}_q)$ with coordinates from $\mathbb{F}_p$ is an isomorphic copy of 
$M_n(\mathbb{F}_p)$, so we can find $a,b \in M_n(\mathbb{F}_q)$ that
generate the embedded copy of $M_n(\mathbb{F}_p)$ as a ring. As the embedded
copy of $M_n(\mathbb{F}_p)$ contains a basis of $M_n(\mathbb{F}_q)$ as a $%
\mathbb{F}_q$-vector space, $a,b$ generate $M_n(\mathbb{F}_q)$ as a $\mathbb{%
F}_q$-algebra.

Now we look at $\phi(M_n(\mathbb{F}_q))$, and in particular, its generators $%
\phi(a),\phi(b)$. We can write $\phi(a)$ as the limit of a Cauchy sequence $%
a_1,a_2,\dots$ and $\phi(b)$ as the limit of a Cauchy sequence $b_1,b_2,\dots
$, with each $a_i,b_i$ in the direct limit of $M_{n_0}(\mathbb{F}_q)
\hookrightarrow M_{n_1}(\mathbb{F}_q) \hookrightarrow \dots$. Let $\delta>0$%
. Then as the operations $+,\cdot,d$ are continuous, we can find some $%
a_i,b_j$ such that 
\begin{equation*}
d(a_i^n,0),d(b_j^n,0),d(a_i,\phi(a)),d(b_j,%
\phi(b)),d(b_ja_i+a_i^{n-1}b_j^{n-1},1)<\delta
\end{equation*}
and 
\begin{equation*}
\left| d(a_i,0)-\frac{n-1}{n} \right|,\left| d(b_j,0)-\frac{n-1}{n}
\right|<\delta.
\end{equation*}

Let $N$ be such that $a_i,b_j$ are both in the image $\iota_{n_N}(M_{n_N}(%
\mathbb{F}_q))$. Let $K\geq N$ and note that $a_i,b_j$ are both in the image 
$\iota_{n_K}(M_{n_K}(\mathbb{F}_q))$. Let $x=\iota_{n_K}^{-1}(a_i)$ and $%
y=\iota_{n_K}^{-1}(b_j)$ be matrices acting on the vector space $\mathbb{F}%
_q^{n_K}$. Then let $W_0=\ker y \cap \ker x^n \cap \ker (yx+x^{n-1}y^{n-1}-1)
$. Clearly $\dim \ker y=n_K-\mathrm{rank}(y)$, so $\left| \frac{\dim \ker y}{%
n_K} - \frac{1}{n} \right|<\delta$, and as each of the other operators has
rank at most $n_K\delta$, each of their kernels has dimension at least $%
n_K(1-\delta)$, and $W_0$, the intersection of these three kernels, has
dimension at least $n_K(\frac{1}{n}-3\delta)$.

As $W_0 \subset \ker (yx+x^{n-1}y^{n-1}-1) \,\cap\, \ker y$, we know that
for any $v \in W_0$, 
\begin{equation*}
(yx+x^{n-1}y^{n-1})v=yxv=v
\end{equation*}
so on the restricted domain of $W_0$, $yx=1$. Thus $x$ maps $W_0$
isomorphically onto $xW_0$ with inverse $y$. For any $1 \leq k \leq n-1$, if 
$W_{k-1} \subset \ker(yx-1) \cap \ker y^k$ has been defined, let $%
W_k=xW_{k-1} \cap \ker (yx+x^{n-1}y^{n-1}-1)$. Then as $yx$ is the identity
on $W_{k-1}$, $x$ is an isomorphism from $W_{k-1}$ to $xW_{k-1}$ with
inverse $y$. Thus $y$ is an isomorphism from $W_k$ to $yW_k \subset W_{k-1}
\subset \ker y^k$, so $W_k \subset \ker y^{k+1}$. If $k<n-1$, we also have $%
0=yx+x^{n-1}y^{n-1}-1=yx-1$ on $W_k$, so $W_k \in \ker (yx-1) \cap \ker
y^{k+1}$, satisfying the inductive hypothesis for the next step. Thus we can
apply this recursive definition all the way through $W_{n-1}$, provided the
base case works. As $W_0 \subset \ker (yx-1) \cap \ker y^1$, the base case
checks out, and this recursive definition is well-defined.

Also, as $\dim xW_{k-1}=\dim W_{k-1}$ and $yx+x^{n-1}y^{n-1}-1$ has rank at
most $n_K\delta$, 
\begin{equation*}
\dim W_{k}\geq \dim xW_{k-1} - n_K\delta =\dim W_{k-1}-n_K\delta.
\end{equation*}
As $\dim W_0 \geq n_K(\frac{1}{n}-4\delta)$, we have that $\dim W_k \geq n_K(%
\frac{1}{n}-(4+k)\delta) \geq n_K(\frac{1}{n}-(4+n)\delta)$. Now we define $%
V=W_{n-1}+ yW_{n-1}+ \dots + y^{n-1}W_{n-1}$, and we wish to prove that $V
\subset \ker x^n \cap \ker y^n \cap \ker (yx+x^{n-1}y^{n-1}-1)$. For any $0
\leq k \leq n-1$, as $y^{n-1}W_{n-1} \subset W_0 \subset \ker y$, $%
y^{k}W_{n-1} \subset \ker y^{n-k}$, so $V \subset \ker y^n$. For any $0 \leq
k \leq n-1$, $y^k W_{n-1} \subset x^{n-k-1}W_0$. As $W_0 \subset \ker x^n$,
we know that $y^k W_{n-1} \subset x^{n-k-1} W_0 \subset \ker x^{k+1} \subset
\ker x^n$, and $V \subset \ker x^n$. Thus also $W_{n-1} \subset \ker x
\subset \ker yx$, and as $xy$ is the identity on $W_k$ for any $k>0$, $%
x^ky=x^{k-1}$ on $W_{k}$, which contains $y^{k-1}W_{n-1}$, so as $xy$ is the
identity on $W_1$, by induction, $x^ky^k$ is the identity on $%
y^{n-k-1}W_{n-1}$, and $x^{n-1}y^{n-1}$ is the identity on $W_{n-1}$. Thus $%
yx+x^{n-1}y^{n-1}=1$ on $W_{n-1}$, and $W_{n-1} \subset \ker
(yx+x^{n-1}y^{n-1}-1)$. We now need to show that $y^{k}W_{n-1} \subset \ker
(yx+x^{n-1}y^{n-1}-1)$ for $k\geq 1$. In that case, $y^{k}W_{n-1} \subset
\ker y^{n-k} \subset y^{n-1}$, and by assumption, $y^kW_{n-1} \subset
W_{n-k-1} \subset \ker (yx-1)$, so $yx+x^{n-1}y^{n-1}-1=yx-1=0$ on $%
y^kW_{n-1}$ as desired.

Thus $V \subset \ker x^n \cap \ker y^n \cap \ker (yx+x^{n-1}y^{n-1}-1)$, and
the relations $x^n=y^n=0=yx+x^{n-1}y^{n-1}-1$ are satisfied when restricted
to the domain $V$. Thus on the domain $V$, for some unit $B \in M_{n_K}(%
\mathbb{F}_q)$ representing a change-of-basis, $x=B(a^{\oplus \frac{\dim V}{n%
}}\oplus 0^{\oplus n_K-\dim V})B^{-1}$ and $y=B(b^{\oplus \frac{\dim V}{n}%
}\oplus 0^{\oplus n_K-\dim V})B^{-1}$.

Let $n_K=nm+r$, with $0\leq r<n$. Then as $n$ divides $\dim V$, $\frac{\dim V%
}{n}\leq m$. If we let $x^{\prime \oplus m}\oplus 0^{\oplus r})B^{-1}$ and $%
y=B(b^{\oplus m}\oplus 0^{\oplus r})B^{-1}$, we see that $x^{\prime \oplus 
\frac{\dim V}{n}}a^{\oplus m-\frac{\dim V}{n}}\oplus 0^{\oplus r})B^{-1}$
and $y^{\prime \oplus \frac{\dim V}{n}}b^{\oplus m-\frac{\dim V}{n}}\oplus
0^{\oplus r})B^{-1}$, so both of these clearly have rank 
\begin{equation*}
m-\frac{\dim V}{n}\leq \frac{n_K-\dim V}{n}
\end{equation*}
Thus if we define the homomorphism $\psi$ on the generators by $%
\psi(a)=x^{\prime }$ and $\psi(b)=y^{\prime }$, we find that $\psi$ is a $%
\frac{r}{n_K}$-embedding, and it suffices to show that $d(\iota_{n_K}\circ
\psi,\phi)<\varepsilon$. To do this, it suffices to show that $%
d(\iota_{n_K}\circ \psi(a),\phi(a)),d(\iota_{n_K}\circ
\psi(b),\phi(b))<\gamma$ for some $\gamma<0$ depending on $\varepsilon$. As 
\begin{equation*}
d(\iota_{n_K}\circ \psi(a),\phi(a))=d(\iota_{n_K}(x^{\prime }),\phi(a))\leq
d(x,x^{\prime })+d(\iota_{n_K}(x),\phi(a))\leq d(x,x^{\prime })+\delta,
\end{equation*}
and similarly $d(\iota_{n_K}\circ \psi(b),\phi(b))\leq d(y,y^{\prime
})+\delta$, we recall that $d(x,x^{\prime }),d(y,y^{\prime })\leq \frac{%
n_K-\dim V}{n}$, so we only need to show that $\frac{n_K-\dim V}{n}%
+\delta<\gamma$ for sufficiently small $\delta$.

For $0\leq r<s \leq n$, we will show that $y^r W_{n-1} \cap y^s W_{n-1}=0$.
As $y^r W_{n-1} \subset \ker x^{r+1}$ and $y^s W_{n-1} \subset \ker y^{n-s}$%
, if $v \in y^r W_{n-1} \cap y^s W_{n-1}$, then $v \in \ker x^{r+1} \cap
y^{n-s}$. As $\langle x,y\rangle \cong M_n(\mathbb{F}_q)$ when restricted to
the domain $V$, $x^sy^s+y^{n-s}x^{n-s}=1$ on $V$, so $%
v=(y^sx^s+x^{n-s}y^{n-s})v=0$. Thus $y^r W_{n-1} \cap y^s W_{n-1}=0$, and 
\begin{equation*}
\dim V=\sum_{s=0}^{n-1} \dim y^s W_{n-1}=n \dim W_{n-1}\geq
n_K(1-(4+n)n\delta)
\end{equation*}
Placing this in our earlier inequality, we find that 
\begin{equation*}
d(x,x^{\prime }),d(y,y^{\prime })<\frac{n_K-\dim V}{n_K} \leq (4+n)n\delta
\end{equation*}
so by taking $\delta$ low enough, we find 
\begin{equation*}
d(x,x^{\prime })+\delta,d(y,y^{\prime 2}+4n+1)\delta<\gamma
\end{equation*}
as desired.
\end{proof}

\section{Explicit Fra\"iss\'e Theory}

\label{sec:limit} 
As $\mathcal{K}(\mathbb{F}_q)$ is a Fra\"iss\'e class, it must have a unique
Fra\"iss\'e limit. A Fra\"iss\'e limit of $\mathcal{K}(\mathbb{F}_q)$ is a 
\textit{$\mathcal{K}(\mathbb{F}_q)$-structure} which is \textit{$\mathcal{K}(%
\mathbb{F}_q)$-universal} and \textit{approximately homogeneous} \cite{Eagle}%
. A $\mathcal{K}(\mathbb{F}_q)$-structure is a structure which can be
realized as the direct limit (in the category of metric structures with the
appropriate signature) of a sequence of elements of $\mathcal{K}(\mathbb{F}%
_q)$, which is, in this case, the completion of the algebraic direct limit
of an inductive sequence of elements of $\mathcal{K}(\mathbb{F}_q)$, or $M(%
\mathbb{F}_q)$ for some factor sequence. Given von Neumann's result, it is
clear that there is only one $\mathcal{K}(\mathbb{F}_q)$-structure up to
isomorphism, so this must be the Fra\"iss\'e limit. If we do not assume this
result, we can still use the uniqueness of the Fra\"iss\'e limit to directly
prove the uniqueness of $M(\mathbb{F}_q)$.

First we will show that if the factor sequence $n_0|n_1|\dots$ is given by $%
n_i=i!$, then the completion $M(\mathbb{F}_q)$ of the corresponding direct
limit is a Fra\"iss\'e limit. Then we will show, with a back-and-forth
argument mirroring the classic proof of the uniqueness of the Fra\"iss\'e
limit, that all $\mathcal{K}(\mathbb{F}_q)$-structures are isomorphic to $M(%
\mathbb{F}_q)$.\cite{Hodges}

The factor sequence $0!,1!,2!,\dots$ is chosen to make $\mathcal{K}$%
-universality simple to prove. For any $i$, we know an embedding of $M_i(%
\mathbb{F}_q)$ into $M_{i!}(\mathbb{F}_q)$, as $i$ divides $i!$. Thus $M_i(%
\mathbb{F}_q)$ embeds into $M_{n_i}(\mathbb{F}_q)$, and thus into $M(\mathbb{%
F}_q)$.

\subsection{Approximate Homogeneity}

Fix a factor sequence $n_0|n_1|\dots$. We will show that the completion of
its direct limit, $M(\mathbb{F}_q)$, is the Fra\"iss\'e limit of all $M_n(%
\mathbb{F}_q)$s. Let $\phi,\psi:M_n(\mathbb{F}_q) \hookrightarrow M(\mathbb{F%
}_q)$ be embeddings. Fix $\varepsilon>0$. Then we apply Lemma \ref{stablem}
to both $\phi,\psi$, letting $N_\phi$ be the value of $N$ that suffices for $%
\phi$, and $N_\psi$ the value of $N$ that suffices for $\psi$. Then let $%
K=\max{N_\phi,N_\psi}$. By the choice of $N_\phi$ and $N_\psi$, we see that
there is a ring homomorphism $\phi^{\prime }: M_n(\mathbb{F}_q) \to M_{n_K}(%
\mathbb{F}_q)$ with $d(\iota_{n_K} \circ \phi^{\prime },\phi)<\frac{%
\varepsilon}{2}$, and similarly a homomorphism $\psi^{\prime }$ close to $%
\psi$, together with units $B_\phi,B_\psi \in M_{n_K}(\mathbb{F}_q)$ such
that where $n_K=nm+r$ and $0 \leq r<n$, for all $A \in M_n(\mathbb{F}_q)$, $%
\phi^{\prime }(A)=B_\phi(A^{\oplus m} \oplus 0^{\oplus r})B_\phi^{-1}$, and
similarly for $\psi^{\prime }$. Thus $B_\psi B_\phi^{-1}\phi^{\prime
}(A)B_\phi B_\psi^{-1}=B_\psi(A^{\oplus m} \oplus 0^{\oplus
r})B_\psi^{-1}=\psi^{\prime }(A)$. Thus if $\beta$ is the inner automorphism
given by conjugation by $B_\psi B_\phi^{-1}$, we have $\psi^{\prime }=\beta
\circ \phi^{\prime }$, and for $A \in M_n(\mathbb{F}_q)$, 
\begin{align*}
d(\beta \circ \phi(A),\psi(A)) &\leq d(\beta \circ \phi(A),\beta \circ
\phi^{\prime }(A))+d(\psi(A),\psi^{\prime }(A)) \\
&=d(\phi(A),\phi^{\prime }(A))+d(\psi(A),\psi^{\prime }(A)) \\
&<\varepsilon
\end{align*}
so $d(\beta \circ \phi,\psi)<\varepsilon$.

\subsection{Uniqueness}

The Fra\"iss\'e limit is also unique, at least among $\mathcal{K}(\mathbb{F}%
_q)$-structures, which are direct limits of elements of $\mathcal{K}(\mathbb{%
F}_q)$. \cite{Eagle} In fact, as von Neumann and Halperin showed, there is
only one $\mathcal{K}(\mathbb{F}_q)$-structure, $M(\mathbb{F}_q)$ (as metric
structures must be complete, the direct limit of an inductive sequence of
metric $\mathbb{F}_q$-algebras is the completion of the algebraic direct
limit). We shall provide an alternate proof of this fact, following the
classic proof of the uniqueness of the Fra\"iss\'e limit.

First we show the approximate extension property.

\begin{lem}[Approximate Extension Property]
\label{extension}  Let $m_0,m_1,\dots$ be a factor sequence. Fix $%
\delta,\delta^{\prime }>0$, and let $\phi:M_{m_k}(\mathbb{F}_q)\to M_n(%
\mathbb{F}_q)$ be a $\delta$-embedding. There exists some $k^{\prime }\geq k$
and a $\delta^{\prime }$-embedding $\psi: M_{n}(\mathbb{F}_q) \to
M_{m_{k^{\prime }}}(\mathbb{F}_q)$ such that the following diagram commutes
up to $\delta+\delta^{\prime }$:  
\begin{equation*}
\begin{tikzcd}
		M_{m_{k'}}(\mathbb{F}_q) & M_n(\mathbb{F}_q) \lar["\psi" above]\\
		M_{m_k}(\mathbb{F}_q)\uar[ hook, "i_{m_{k'},m_k}"]\urar["\phi" below] 
	\end{tikzcd}
\end{equation*}
\end{lem}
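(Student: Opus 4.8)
The plan is to mimic the classical back-and-forth step: given a $\delta$-embedding $\phi\colon M_{m_k}(\mathbb{F}_q)\to M_n(\mathbb{F}_q)$, I want to produce, far enough up the factor sequence, a $\delta'$-embedding $\psi$ going the other way whose composition with $\phi$ is close to the inclusion $i_{m_{k'},m_k}$. First I would unpack the definition of $\delta$-embedding: there is a unit $y\in M_n(\mathbb{F}_q)$ and an integer $\ell$ with $\phi(x)=y\,(x^{\oplus\ell}\oplus 0^{n-m_k\ell})\,y^{-1}$ and $m_k\ell/n\ge 1-\delta$. Up to replacing $\phi$ by $y^{-1}\phi(\cdot)y$ (which only conjugates everything and does not affect the existence of $\psi$ or the commutativity estimate, since $d$ is conjugation-invariant), I may assume $\phi(x)=x^{\oplus\ell}\oplus 0^{n-m_k\ell}$.

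Next I would choose $k'$. Since $\lim m_j=\infty$ and the $m_j$ form a factor sequence, pick $k'\ge k$ large enough that $n$ divides $m_{k'}$ \emph{up to small relative error}, i.e. writing $m_{k'}=nm+r$ with $0\le r<n$ we have $r/m_{k'}<\delta'$; this is possible because $r<n$ is bounded while $m_{k'}\to\infty$, so $r/m_{k'}\to 0$ along the sequence. Additionally require $m_{k'}/m_k\ge$ (whatever is needed), which is automatic for $k'$ large. Now the natural candidate for $\psi\colon M_n(\mathbb{F}_q)\to M_{m_{k'}}(\mathbb{F}_q)$ is $\psi(z)=B\,(z^{\oplus m}\oplus 0^{\oplus r})\,B^{-1}$ for a suitable change-of-basis unit $B$; by construction $r/m_{k'}<\delta'$, so $\psi$ is a $\delta'$-embedding once we exhibit it as a genuine (possibly non-unital) homomorphism of this shape, which is clear since $z\mapsto z^{\oplus m}\oplus 0^{\oplus r}$ is a homomorphism and conjugation by a unit is an automorphism.

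The heart of the matter is choosing $B$ so that $\psi\circ\phi$ agrees up to $\delta+\delta'$ with $i_{m_{k'},m_k}(w)=w\otimes 1_{m_{k'}/m_k}=w^{\oplus m_{k'}/m_k}$. Compute $\psi(\phi(w))=B\big((w^{\oplus\ell}\oplus 0^{n-m_k\ell})^{\oplus m}\oplus 0^{\oplus r}\big)B^{-1}=B\big(w^{\oplus \ell m}\oplus 0^{\oplus (n-m_k\ell)m+r}\big)B^{-1}$. On the other hand $i_{m_{k'},m_k}(w)=w^{\oplus m_{k'}/m_k}$, and $m_{k'}/m_k$ versus $\ell m$: we have $m_k\ell m \le nm \le m_{k'}$, and $m_{k'}-m_k\ell m=(nm+r)-m_k\ell m=(n-m_k\ell)m+r$, whose ratio to $m_{k'}$ is at most $(n-m_k\ell)m/m_{k'}+r/m_{k'}\le (1-m_k\ell/n)\cdot nm/m_{k'}+\delta'\le \delta\cdot(nm/m_{k'})+\delta'\le\delta+\delta'$ since $nm\le m_{k'}$. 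So both $\psi\circ\phi(w)$ and $i_{m_{k'},m_k}(w)$ are, up to conjugation, of the form $w^{\oplus t}\oplus 0$ with the two values of $t$ (namely $\ell m$ and $m_{k'}/m_k$) differing by at most $(\delta+\delta')m_{k'}/?$— precisely, the ``rank deficit'' between them is $\le(\delta+\delta')m_{k'}$. Choosing $B$ to be the permutation-type unit that lines up the first $\ell m$ copies of $w$ in $\psi(\phi(w))$ with the first $\ell m$ of the $m_{k'}/m_k$ copies in $i_{m_{k'},m_k}(w)$, the difference $\psi\circ\phi(w)-i_{m_{k'},m_k}(w)$ is supported on a block of size $\le(\delta+\delta')m_{k'}$ for every $w$ simultaneously (since $B$ does not depend on $w$), hence has rank $\le(\delta+\delta')m_{k'}$, giving $d(\psi\circ\phi(w),i_{m_{k'},m_k}(w))\le\delta+\delta'$ uniformly. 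The only subtlety — and the step I expect to be the main obstacle — is verifying that a single unit $B$ can simultaneously realize both the block-permutation identification \emph{and} absorb the earlier conjugation by $y$ and any permutation of the zero-padding, i.e. that the bookkeeping of which $\mathbb{F}_q^{m_{k'}}$-subspaces carry the ``$w$-action'' is consistent; this is the same kind of careful subspace-tracking done in the proof of Lemma~\ref{stablem}, and I would organize it by explicitly decomposing $\mathbb{F}_q^{m_{k'}}$ into the relevant invariant summands and defining $B$ as the linear map matching them up.
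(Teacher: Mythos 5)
Your proposal is correct and is essentially the paper's own argument: both normalize away the unit $y$ (the paper folds it into $\psi$ by defining $\psi(b)=z((y^{-1}by)^{\oplus s}\oplus 0)z^{-1}$, which is exactly your precomposition trick), choose $k'$ so that $m_{k'}=nm+r$ with $r/m_{k'}<\delta'$, and bound $d(\psi\circ\phi(w),i_{m_{k'},m_k}(w))$ by the relative rank deficit $\le\delta\cdot\frac{nm}{m_{k'}}+\frac{r}{m_{k'}}<\delta+\delta'$. The ``subtlety'' you flag at the end is not a real obstacle: the conjugation by $y$ was already absorbed in your reduction, and the block alignment is achieved simply by letting the free unit $B$ (the paper's $z$) include the fixed, $w$-independent permutation matrix identifying $(w^{\oplus\ell}\oplus 0)^{\oplus m}\oplus 0^{\oplus r}$ with $w^{\oplus\ell m}\oplus 0$, which is all the paper does when it says ``with the correct choice of $z$.''
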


\begin{proof}
As $\phi$ is a $\delta$-embedding, we can write it as $\phi: a \mapsto
y(a^{\oplus r} \oplus 0^{n-m_kr})y^{-1}$ where $m_kr\geq (1-\delta)n$. Let $%
\psi: b \mapsto z((y^{-1}by)^{\oplus s}\oplus 0^{m_{k^{\prime }}-sn})z^{-1}$%
, which makes $\psi$ a $\delta^{\prime }$-embedding as long as $\frac{%
m_{k^{\prime }}-sn}{m_{k^{\prime }}}\leq \delta^{\prime }$ which is
satisfied when $\delta^{\prime }m_{k^{\prime }}>n$, and $s=\lfloor \frac{%
m_{k^{\prime }}}{n}\rfloor$, so $m_{k^{\prime }}-sn<n<\delta^{\prime
}m_{k^{\prime }}$.

Then the only remaining requirement is that for all $a \in M_{m_k}(\mathbb{F}%
_q)$, $d(\psi \circ \phi(a),i_{m_{k^{\prime
}},m_k}(a))>\delta+\delta^{\prime }$.  
\begin{align*}
\psi \circ \phi(a) &=z((y^{-1}y(a^{\oplus r} \oplus
0^{n-m_kr})y^{-1}y)^{\oplus s}\oplus 0^{m_{k^{\prime }}-sn})z^{-1} \\
&\leq z((a^{\oplus r} \oplus 0^{n-m_kr})^{\oplus s}\oplus 0^{m_{k^{\prime
}}-sn})z^{-1}
\end{align*}
so with the correct choice of $z$,  
\begin{equation*}
\psi \circ \phi(a)=a^{\oplus rs}\oplus 0^{m_{k^{\prime }}-rsm_k}
\end{equation*}
so  
\begin{equation*}
d(\psi \circ \phi(a),i_{m_{k^{\prime }},m_k}(a))=1-\frac{rsm_k}{m_{k^{\prime
}}}
\end{equation*}
and  
\begin{equation*}
\frac{rsm_k}{m_{k^{\prime }}}\geq (1-\delta)\frac{sn}{m_{k^{\prime }}}\geq
(1-\delta)(1-\delta^{\prime })>1-(\delta+\delta^{\prime })
\end{equation*}
so we have  
\begin{equation*}
d(\psi \circ \phi(a),i_{m_{k^{\prime }},m_k}(a))=1-\frac{rsm_k}{m_{k^{\prime
}}}<\delta+\delta^{\prime }
\end{equation*}
as desired.
\end{proof}

\begin{thm}
If $X$ and $Y$ are $\mathcal{K}(\mathbb{F}_q)$-structures, then $X \cong Y$.
\end{thm}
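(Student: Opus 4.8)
The plan is to run a back-and-forth argument between $X$ and $Y$, exactly mirroring the classical proof of uniqueness of Fra\"iss\'e limits, but with $\delta$-embeddings and approximate commutativity in place of honest embeddings and exact commutativity. Write $X$ as the completion of the direct limit of a factor sequence $M_{m_0}(\mathbb{F}_q)\hookrightarrow M_{m_1}(\mathbb{F}_q)\hookrightarrow\cdots$ with inclusions $\iota_{m_k}^X:M_{m_k}(\mathbb{F}_q)\to X$, and $Y$ as the completion of the direct limit of a factor sequence $M_{l_0}(\mathbb{F}_q)\hookrightarrow M_{l_1}(\mathbb{F}_q)\hookrightarrow\cdots$ with inclusions $\iota_{l_j}^Y:M_{l_j}(\mathbb{F}_q)\to Y$. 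Fix a summable sequence $\varepsilon_0,\varepsilon_1,\ldots>0$ (say $\varepsilon_s=2^{-s-1}\varepsilon$ for a target error $\varepsilon$, or just $2^{-s}$ if we want an isometric isomorphism in the end). The goal of the construction is to produce an increasing sequence of indices $k_0<k_1<\cdots$ and $j_0<j_1<\cdots$, together with $\delta_s$-embeddings $f_s:M_{m_{k_s}}(\mathbb{F}_q)\to M_{l_{j_s}}(\mathbb{F}_q)$ and $g_s:M_{l_{j_s}}(\mathbb{F}_q)\to M_{m_{k_{s+1}}}(\mathbb{F}_q)$ with $\delta_s\to 0$, such that the triangles
\begin{equation*}
g_s\circ f_s \approx_{\eta_s} i_{m_{k_{s+1}},m_{k_s}}, \qquad f_{s+1}\circ g_s \approx_{\eta_s} i_{l_{j_{s+1}},l_{j_s}}
\end{equation*}
commute up to small errors $\eta_s$ with $\sum_s\eta_s<\infty$. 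Each such extension step is furnished directly by Lemma \ref{extension} (the Approximate Extension Property): given the current $\delta$-embedding, we apply the lemma with a new tolerance $\delta'$ chosen small enough, obtaining the next $\delta'$-embedding going the other direction and a triangle commuting up to $\delta+\delta'$; we then shrink the parameters so the errors are summable, and we may always pass to a later stage in the opposite sequence to satisfy the hypothesis $\delta' m_{k'}>n$ of the lemma.

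Once the zig-zag system is built, I would extract the isomorphism as a limit. Define $F:M_0(X)\to Y$ on the dense subalgebra by: given $\xi=\iota^X_{m_{k_s}}(a)$ for some $a\in M_{m_{k_s}}(\mathbb{F}_q)$, set $F(\xi)=\lim_{t\to\infty}\iota^Y_{l_{j_t}}\bigl(f_t\circ i_{m_{k_t},m_{k_s}}(a)\bigr)$. The composite $\delta$-embeddings $M_{m_{k_s}}(\mathbb{F}_q)\to M_{l_{j_t}}(\mathbb{F}_q)$ obtained by alternately applying the $f$'s and $g$'s have defect parameters tending to $0$, and the approximate commutativity of the triangles shows this sequence is Cauchy in $Y$; hence $F$ is well defined. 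A $\delta$-embedding is, on the nose, a homomorphism that agrees with a genuine (non-unital) embedding off a subspace of dimension $<\delta n$, so in the metric $d$ a $\delta$-embedding differs from an embedding by at most $\delta$; letting $\delta\to0$ shows $F$ is an isometric algebra homomorphism on a dense set, hence extends to an isometric homomorphism $F:X\to Y$. By the symmetric construction using the $g$'s we get $G:Y\to X$, and the commuting-triangle relations guarantee $G\circ F$ and $F\circ G$ are the identity on the respective dense subalgebras (again as limits, the cumulative errors being summable), so $F$ and $G$ are mutually inverse isomorphisms.

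I expect the main obstacle to be the bookkeeping that makes the "limit of $\delta$-embeddings is an embedding" step rigorous: one must check that composing many $\delta$-embeddings with shrinking defects, interleaved with maps that only \emph{approximately} commute with the structural inclusions, still yields a Cauchy sequence whose limit is multiplicative and isometric, not merely a Cauchy sequence of approximate homomorphisms. Concretely, the subtlety is that $g_s\circ f_s$ is not $i_{m_{k_{s+1}},m_{k_s}}$ but only close to it, so when forming $F$ we are not taking a limit of a coherent system but of an approximately coherent one; the summability $\sum_s\eta_s<\infty$ is exactly what is needed to control the accumulated drift, and one must verify the telescoping estimate
\begin{equation*}
d\bigl(\iota^Y_{l_{j_{t+1}}}(f_{t+1}\circ i_{\cdot}(a)),\ \iota^Y_{l_{j_t}}(f_t\circ i_{\cdot}(a))\bigr)\ \le\ \eta_t+\delta_t+\delta_{t+1}
\end{equation*}
or similar, uniformly over $a$ with $d(a,0)\le1$. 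A secondary, more routine point is verifying that the limiting map respects scalar multiplication by $\mathbb{F}_q$ and the ring operations --- this follows since each $\delta$-embedding already does, and the operations are continuous --- and confirming that when all the $\varepsilon_s$ are chosen to sum to an arbitrarily small value (or to make the defects vanish), the resulting $F$ is genuinely isometric rather than merely a $1$-Lipschitz contraction, which one gets for free once both $F$ and $G$ are built and shown to be mutually inverse.
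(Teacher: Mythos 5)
Your proposal matches the paper's proof in both structure and key ideas: a back-and-forth construction of alternating $\delta$-embeddings via Lemma~\ref{extension} with summable defects, followed by a telescoping estimate to show the pointwise limits are well-defined isometric homomorphisms that are mutually inverse. The paper works out the bookkeeping you flag as the main obstacle with explicit error bounds ($2^{-2i}$, $2^{-2i-1}$, etc.), but the plan is the same.
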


\begin{proof}
Let $X$ be the completion of a direct limit corresponding to the from the
factor sequence $m_0,m_1,\dots$, and let $Y$ be the completion of the direct
limit corresponding to the factor sequence $n_0,n_1,\dots$. Let $\phi_0:
M_{m_0}(\mathbb{F}_q) \to M_{n_0}(\mathbb{F}_q)$ be a $1$-embedding, so $%
i_{n_1,n_0}\circ \phi_0$ is a $1$-embedding as well. Thus by Lemma \ref%
{extension}, there is some $M_{m_{j_1}}(\mathbb{F}_q)$ and a $2^{-1}$%
-embedding $\psi_0: M_{n_0}(\mathbb{F}_q) \to M_{m_{j_1}}$ such that $%
d(\psi_0 \circ i_{n_1,n_0}\circ \phi_0,i_{m_{j_1},m_0})<1+2^{-1}$. We define 
$j_0=k_0=0$, and given $j_i$ or $k_i$, define $X_i=M_{m_{j_i}}$ and $%
Y_i=M_{n_{k_i}}$. Now we continue this process recursively. Let $%
\phi_i:X_{2i} \to Y_{2i}$ is a $2^{-2i}$-embedding. We define $%
k_{2i+1}=k_{2i}+1$, so that the sequence $Y_0,Y_1,\dots$ does not terminate.
By Lemma \ref{extension}, there is a $X_{2i+1}$ and a $2^{-(2i+1)}$%
-embedding $\psi_i: Y_{2i+1} \to X_{2i+1}$ such that $d(\psi_i \circ
\iota_{n_{k_{2i+1}},n_{k_{2i}}}\circ
\phi_i,\iota_{m_{j_{2i+1}},m_{j_{2i}}})<2^{-2i}+2^{-(2i+1)}$. This just
generalizes the case of $i=0$.

Similarly, let $\psi_i:Y_{2i+1} \to X_{2i+1}$ be a $2^{-(2i+1)}$-embedding.
Then we define $j_{2i+2}=j_{2i+1}+1$, so that the sequence $X_0,X_1,\dots$
does not terminate either. By Lemma \ref{extension}, there is a $Y_{2i+2}$
and a $2^{-(2i+2)}$-embedding $\phi_{i+1}: X_{2i+2} \to Y_{2i+2}$ such that $%
d(\phi_{i+1} \circ \psi_i,\mathrm{id})<2^{-(2i+1)}+2^{-(2i+2)}$.

We will show that $d(\phi_{n}(x),\phi_{n+1}(x))<2^{-n+2}$. 
\begin{align*}
d(\phi_n(x),\phi_{n+1}&(x)) \\
&\leq d(\phi_n(x),\phi_{n+1}\circ \psi_n \circ \phi_n(x))+d(\phi_{n+1}\circ
\psi_n \circ \phi_n(x),\phi_{n+1}(x)) \\
&\leq d(\phi_n(x),\phi_{n+1}\circ \psi_n \circ \phi_n(x))+d(\psi_n \circ
\phi_n(x),x) \\
&<(2^{-2n}+2^{-2n-1})+(2^{-2n-2}+2^{-2n-3})<2^{-2n+1}
\end{align*}
and thus if $n<m$, $d(\phi_m(x),\phi_n(x))<%
\sum_{k=n}^{m-1}2^{-2k+1}<2^{-2n+2}$, and $\phi_n(x),\phi_{n+1}(x),\dots$ is
a Cauchy Sequence, and by the same proof so is $\psi_n(y),\dots$. Now we
define $\phi=\lim_i \phi_i$ and $\psi=\lim_i \psi_i$ pointwise on $\bigcup_i
X_i$ and $\bigcup_i Y_i$ respectively. Then for any $i$ and $x \in X_{2i}$, $%
d(\psi_i \circ \phi_i(x),x)<2^{-2i}+2^{-2i-1}$. Thus if $x \in X_{2i}$, $%
\lim_{j \to \infty} d(\psi_j \circ \phi_j(x),x)=0$.

For any $\delta$-embedding $\theta:A \to B$, $d(\theta(1_A),1_B)<\delta$, so 
$\phi(1)=\lim_i \phi_i(1)=1$, and by the same reasoning, $\psi$ is also
unital.

Now we show that $\psi,\phi$ are 1-Lipschitz. Fix $\varepsilon>0$, and let $%
x_1,x_2 \in \bigcup_i X_i$ be such that $d(x_1,x_2)<\varepsilon$. Let $j$ be
such that $x_1,x_2 \in X_{2n}$. Then if $r(x)$ is the normalized rank of $x$%
, we have for any $x \in X_{2n}$, $r(\phi_{n}(x))\leq r(x)$ because $\phi_j$
is a $\delta$-embedding for some $\delta$, and thus $d(\phi_{n}(x_1),%
\phi_{n}(x_2))\leq d(x_1,x_2)<\varepsilon$, so $\phi$ is 1-Lipschitz on $%
\bigcup_i X_i$, which is dense in $X$, so it is possible to extend $\phi$ to
the entirety of $X$ as a 1-Lipschitz and thus continuous map. Similarly, $%
\psi$ is 1-Lipschitz, and can be extended to all of $Y$.

Fix $x_1,x_2 \in \bigcup_i X_i$, and any $\delta>0$, there is some $N$ such
that if $n>N$, $\phi_n$ is a $\delta$-embedding. Thus also $%
d(\phi_n(x_1),\phi_n(x_2))\geq (1-\delta)d(x_1,x_2)$, so $%
d(\phi(x_1),\phi(x_2))=d(x_1,x_2)$, and $\phi$ (and similarly $\psi$) is an
isometry.

We now wish to show that $\psi \circ \phi$ is the identity on $X$, and the
same proof will show that $\phi \circ \psi$ is the identity on $Y$. We note
that for any $n<m$, as proven earlier, $d(\phi_m(x),\phi_n(x))<2^{-n+2}$,
and $d(\psi_m(y),\psi_n(y))<2^{-2n+1}$. Thus for any $n$, 
\begin{align*}
d(\psi \circ &\phi(x),x) \\
&\leq d(\psi \circ \phi(x), \psi \circ \phi_n(x))+d(\psi \circ
\phi_n(x),\psi_n \circ \phi_n(x))+d(\psi_n\circ \phi_n(x),x) \\
&< d(\phi(x),\phi_n(x))+d(\psi \circ \phi_n(x),\psi_n \circ
\phi_n(x))+(2^{-2n}+2^{-2n-1}) \\
&< 2^{-2n+2}+2^{-2n+1}+2^{-2n}+2^{-2n-1} \\
&<2^{-2n+3}
\end{align*}
and therefore $d(\psi \circ \phi(x),x)=0$, as desired.

As $\psi$ and $\phi$ are inverses, and each is a unital isometric
homomorphism, they are isomorphisms, and $X \cong Y$.
\end{proof}

\section{Extreme Amenability}

\label{sec:exam}

\subsection{The set of inner automorphisms is dense in $\mathrm{Aut}(M(%
\mathbb{F}_q))$}

In order to show that the inner automorphisms are dense in $\mathrm{Aut}(M(%
\mathbb{F}_q))$, it suffices to choose an automorphism $\phi \in \mathrm{Aut}%
(M(\mathbb{F}_q))$, and a basis open neighborhood around it, and find an
inner automorphism in that neighborhood. Fix a factor sequence $n_0,n_1,\dots
$, and let $M_0(\mathbb{F}_q)$ be the direct limit associated to it, dense
in $M(\mathbb{F}_q)$. Now let $U$ be a basis neighborhood around $\phi$,
which will be of the form $\bigcap_{x \in X}\{f \in \mathrm{Aut}(M(\mathbb{F}%
_q)): d(f(x),\phi(x))<\varepsilon\}$ for some finite set $X=\{x_1,\dots,x_k\}
$.

For each $x_i \in X$, let $y_i \in M_0(\mathbb{F}_q)$ be such that $%
d(x_i,y_i)<\frac{\varepsilon}{3}$. Then we shall find an inner automorphism $%
\psi$ such that $d(\psi(y_i),\phi(y_i))<\frac{\varepsilon}{3}$ for each $y_i$%
. Given such an automorphism, we find 
\begin{align*}
d(\psi(x_i),\phi(x_i))&<d(\psi(x_i),\psi(y_i))+d(\psi(y_i),\phi(y_i))+d(%
\phi(y_i),\phi(x_i)) \\
&<2d(x_i,y_i)+\frac{\varepsilon}{3} \\
&<\varepsilon
\end{align*}
so $\psi \in U$. Thus it suffices to find an inner automorphism $\psi$ such
that $d(\psi(y),\phi(y))<\varepsilon$ for each $y \in Y$, for each finite
set $Y \subset M_0(\mathbb{F}_q)$ and each $\varepsilon>0$.

Let us fix some such $Y \subset M_0(\mathbb{F}_q)$ and $\varepsilon>0$. As $Y
$ is finite, there must be some $n_m$ such that $Y$ is contained in the
image $\iota_{n_m}(M_{n_m}(\mathbb{F}_q))$.

Define $\phi^{\prime }: M_{n_m}(\mathbb{F}_q) \hookrightarrow M(\mathbb{F}_q)
$ by $\phi^{\prime }=\phi \circ \iota_{n_m}$. Clearly $\phi^{\prime }$ is an
embedding. By Lemma \ref{stablem}, there exists some $n_K$, and an embedding 
$\psi: M_{n_m}(\mathbb{F}_q)\hookrightarrow M_{n_K}(\mathbb{F}_q)$ such that 
$d(\iota_{n_K} \circ \psi(x),\phi^{\prime }(x))<\varepsilon$ for each $x \in
M_{n_m}(\mathbb{F}_q)$, so for each $y \in Y$, as $Y \subset M_{n_m}(\mathbb{%
F}_q)$, $d(\iota_{n_K} \circ \psi(y),\phi^{\prime }(y))<\varepsilon$.

\subsection{Quotient}

Let $A(p)$ be as in the paper by Carderi and Thom.\cite{carderithom} They
assert that $A(p)$ is the group of units of $M(\mathbb{F}_q)$, which
obviously has a continuous surjective homomorphism onto the inner
automorphism group $B(p)$ of units of $M(\mathbb{F}_q)$. As $B(p)$, the
image of $A(p)$ under a continuous homomorphism, is dense in $\mathrm{Aut}(M(%
\mathbb{F}_q))$, we have, with Proposition 6.2 of \cite{hereditary}, that $%
\mathrm{Aut}(M(\mathbb{F}_q))$ is itself extremely amenable, and by the KPT
correspondence, the class of matrix algebras has the Ramsey Property.

\section{Ramsey Property}

\label{sec:ramsey}

\begin{thm}
The Fra\"iss\'e class $\mathcal{K}(\mathbb{F}_q)$ has the approximate Ramsey
Property. That is, if $A,B \in \mathcal{K}(\mathbb{F}_q)$, and $\varepsilon>0
$, there exists some $C \in \mathcal{K}(\mathbb{F}_q)$ such that for any
continuous coloring $\gamma$ of ${\binom{C }{A}}$, that is, a 1-Lipschitz
map $\gamma: {\binom{C }{A}} \to [0,1]$, there is some $B^{\prime }\in {%
\binom{C }{B}}$ such that the oscillation of $\gamma$ over the subset ${%
\binom{B^{\prime }}{A}} \subset {\binom{C }{A}}$ is at most $\varepsilon$.
\end{thm}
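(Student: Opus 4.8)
The plan is to derive the approximate Ramsey property of $\mathcal{K}(\mathbb{F}_q)$ from the extreme amenability of $\mathrm{Aut}(M(\mathbb{F}_q))$ via the metric version of the Kechris--Pestov--Todorcevic correspondence established by Melleray and Tsankov \cite{melleraytsankov}. All the necessary input has been assembled: Section~\ref{sec:class} shows $\mathcal{K}(\mathbb{F}_q)$ is a metric Fra\"iss\'e class, Section~\ref{sec:limit} identifies its Fra\"iss\'e limit with $M(\mathbb{F}_q)$, and Section~\ref{sec:exam} shows that $G := \mathrm{Aut}(M(\mathbb{F}_q))$ is extremely amenable. The correspondence of \cite{melleraytsankov} states that for a metric Fra\"iss\'e class, extreme amenability of the automorphism group of the limit is equivalent to the approximate Ramsey property of the class, so in principle the theorem follows immediately; the substance of the proof is to check that the combinatorial formulation in the statement above --- coloring the space $\binom{C}{A}$ of copies of $A$ in $C$ and finding a copy $B'$ of $B$ on which the oscillation is small --- is the one matched by that correspondence, and to locate the existential witness $C$ inside $\mathcal{K}(\mathbb{F}_q)$ itself.

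First I would set up the relevant $G$-space. Fix $A = M_a(\mathbb{F}_q)$ and $B = M_b(\mathbb{F}_q)$. For $C = M_c(\mathbb{F}_q) \in \mathcal{K}(\mathbb{F}_q)$ with $a, b \mid c$, the Skolem--Noether theorem shows that every embedding $A \hookrightarrow C$ is the canonical embedding $\iota_{c,a}$ followed by an inner automorphism of $C$; hence $\binom{C}{A}$ is a single orbit of the finite group $\mathrm{Aut}(C)$, which we equip with the pseudometric induced by the rank metric. The same remark with $C$ replaced by $M(\mathbb{F}_q)$ exhibits $\binom{M(\mathbb{F}_q)}{A}$ as a transitive $G$-space, and, since an embedding $\iota_c \colon C \hookrightarrow M(\mathbb{F}_q)$ is an isometry, it embeds $\binom{C}{A}$ isometrically into $\binom{M(\mathbb{F}_q)}{A}$; a $1$-Lipschitz coloring $\gamma \colon \binom{C}{A} \to [0,1]$ therefore extends to a uniformly continuous function on $\binom{M(\mathbb{F}_q)}{A}$. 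Extreme amenability of $G$ forces this coloring to be approximately constant along a $G$-orbit, and unwinding this with the standard finite-approximation argument (as in \cite{kpt}, \cite{melleraytsankov}) produces, for each $\varepsilon > 0$, a $C \in \mathcal{K}(\mathbb{F}_q)$ together with a copy $B' \in \binom{C}{B}$ on which the restriction of $\gamma$ to $\binom{B'}{A}$ has oscillation at most $\varepsilon$. The one step where the \emph{metric} nature of the class genuinely enters --- and where the witness is transferred from $M(\mathbb{F}_q)$ down to a finite stage --- is the approximation of embeddings into $M(\mathbb{F}_q)$ by honest embeddings into some $M_{n_K}(\mathbb{F}_q)$, which is precisely Lemma~\ref{stablem}, together with the approximate extension property of Lemma~\ref{extension}.

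The main obstacle is bookkeeping rather than a new idea: one must check that the pseudometric placed on $\binom{C}{A}$ is the one under which the directed system of the $\binom{C}{A}$, $C \in \mathcal{K}(\mathbb{F}_q)$, completes to the $G$-flow that governs the correspondence, and that the non-rigidity of matrix algebras (they carry inner automorphisms) does not introduce an ordering obstruction --- this is where Skolem--Noether does the essential work, collapsing $\binom{C}{A}$ to a single automorphism orbit. For the explicit bound advertised in the abstract, I would avoid using \cite{melleraytsankov} as a black box and instead argue quantitatively: given a coloring on $\binom{C}{A}$, lift it to a right-uniformly continuous function on the unit group $U(M(\mathbb{F}_q))$, apply Carderi and Thom's extreme amenability of that group \cite{carderithom} in the form of a measure-concentration estimate to conclude that the function is nearly constant on a large set, and then transfer the conclusion to a finite stage $M_{n_K}(\mathbb{F}_q)$ using the effective content of Lemma~\ref{stablem} coming from Kassabov's finite presentation of $M_n(\mathbb{F}_q)$ in \cite{matrixgen}; tracking the constants through these steps yields an explicit $c$ as a function of $a$, $b$ and $\varepsilon$.
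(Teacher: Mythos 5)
Your primary argument --- derive the approximate Ramsey property from extreme amenability of $\mathrm{Aut}(M(\mathbb{F}_q))$ via the metric KPT correspondence of Melleray--Tsankov --- is correct in principle, and indeed the paper notes precisely this implication in passing at the end of Section~\ref{sec:exam}; but it is not the route taken in Section~\ref{sec:ramsey}, and it would not yield the explicit bound advertised. The paper's proof of the stated theorem is entirely finitary and never touches $M(\mathbb{F}_q)$, its unit or automorphism group, or Lemma~\ref{stablem}: with $A = M_a(\mathbb{F}_q)$, $B = M_b(\mathbb{F}_q)$, $a \mid b$, Skolem--Noether parametrizes the $k = \left|\binom{B}{A}\right|$ copies of $A$ in $B$ by conjugators $f_j \in SL_b(\mathbb{F}_q)$; these are tensored up to a set $F \subset SL_c(\mathbb{F}_q)$ for a suitable multiple $c$ of $b$; the coloring $\gamma$ is pulled back to a $1$-Lipschitz map $\gamma'$ on $SL_c(\mathbb{F}_q)$, and an open cover of $[0,1]$ by $m = \lceil 3/\varepsilon\rceil$ intervals of diameter $\varepsilon$ is pulled back to a cover $\{U_i\}$ of $SL_c(\mathbb{F}_q)$; Carderi--Thom's quantitative concentration result (their Theorem~2.8 for $SL_c(\mathbb{F}_q)$ with the rank metric) then produces a single $g$ with $gF \subset U_i$, and $B' = gBg^{-1}$ is the desired copy. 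This is what yields $c > 64\varepsilon^{-2}\max\{\log(2k),\log(6\lceil\varepsilon^{-1}\rceil)\}$. Your secondary ``quantitative'' sketch correctly identifies Carderi--Thom as the key input, but the detour you propose --- lift to the unit group of $M(\mathbb{F}_q)$, apply concentration there, then descend to a finite stage via Lemma~\ref{stablem} --- is unnecessary: the concentration statement is already a finitary statement about $SL_c(\mathbb{F}_q)$ and can be applied directly, which is both simpler and what makes the constants explicit.
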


\begin{proof}
Let $A,B \in \mathcal{K}(\mathbb{F}_q)$ be such that $A \leq B$, that is, $%
A=M_a(\mathbb{F}_q), B=M_b(\mathbb{F}_q)$, where $a$ divides $b$. Let $%
k=\left| {\binom{B }{A}} \right|$. Fix $\varepsilon$.

Let $c>64
\varepsilon^{-2}\max\{\log(2k),\log(6\lceil\varepsilon^{-1}\rceil)\}$ be a
multiple of $b$, and $C=M_c(\mathbb{F}_q)$. Now let $\gamma$ be a continuous
coloring of ${\binom{C }{A}}$. We seek to find some $B^{\prime }\leq C$ with 
$B \cong B^{\prime }$ such that the oscillation of $\gamma$ on ${\binom{%
B^{\prime }}{A}}$ is at most $\varepsilon$.

Let ${\binom{B }{A}}=\{A_1,\dots,A_k\}$. For each $A_j \in {\binom{B }{A}}$,
there exists some inner automorphism $\phi_j$ of $B$ such that $%
\phi_j(A)=f_j A f_j^{-1}=A_j$, with $f_j$ a unit of $B$, which can be taken
to have determinant 1, so that $f_j \in SL_b(\mathbb{F}_q)$. Let $F=\{f_j
\otimes 1_{c/b}: 1 \leq j \leq k\}$, so that $F \subset SL_c(\mathbb{F}_q)$.
Now we define a coloring $\gamma^{\prime }$ of $SL_c(\mathbb{F}_q)$, given
by $\gamma^{\prime -1})$.

Now let $m=\left\lceil \frac{3}{\varepsilon}\right\rceil$, and define $%
\mathcal{U}$ to be an open cover $\{U_i: 1 \leq i \leq m\}$ of $SL_c(\mathbb{%
F}_q)$, such that every $\frac{\varepsilon}{3}$-ball in $SL_c(\mathbb{F}_q)$
is contained in some $U_i$. Specifically, let us let $V_i=\left(\frac{i-2}{3}%
\varepsilon,\frac{i+1}{3}\varepsilon\right)$, observing that $\{V_i: 1 \leq
i \leq m\}$ is an open cover for $[0,1]$. We note that the $\frac{\varepsilon%
}{3}$-ball around any point in $[0,1]$ is contained in some $V_i$. Then let $%
U_i=\gamma^{\prime -1}(V_i)$. As $\gamma^{\prime }$ is 1-Lipschitz, if $x
\in SL_c(\mathbb{F}_q)$, and $B(x)$ is the $\frac{\varepsilon}{3}$-ball
around it, then for any $y \in B(x)$, $d(\gamma^{\prime }(y),\gamma^{\prime
}(x))\leq d(y,x)<\varepsilon$, so $\gamma^{\prime }(B(x))$ is contained in $%
B(\gamma^{\prime }(x))$, which is contained in turn by $V_i$ for some $i$,
so $B(x) \subset \gamma^{\prime -1}(V_i)=U_i$.

Theorem 2.8 of Carderi and Thom's paper\cite{carderithom} states that there
exists some $g \in SL_c(\mathbb{F}_q)$ such that $gF \subset U_i$ for some $%
1\leq i\leq m$, as long as we take $c > 64
\varepsilon^{-2}\max\{\log(2k),\log(2m)\}$, which is satisfied by our choice 
\begin{equation*}
c>64 \varepsilon^{-2}\max\{\log(2k),\log(6\lceil\varepsilon^{-1}\rceil)\}
\end{equation*}
Thus for each $f_j \in F$, $\gamma^{\prime }(gf_j) \in V_i$, so $%
\gamma(gf_jAf_j^{-1}g^{-1}) \in V_i$. As $f_j A f_j^{-1} \subset B$, we have 
$gf_j A f_j^{-1}g^{-1} \subset gBg^{-1}$, and thus $gf_j A f_j^{-1}g^{-1}
\in {\binom{gBg^{-1} }{A}}$. Thus if $S=\{gf_j A f_j^{-1} g^{-1}: 1 \leq j
\leq k\}$, then $S \subset {\binom{gBg^{-1} }{A}}$, and $\gamma(S) \in V_i$,
so the oscillation of $\gamma$ on $S$ is at most the diameter of $V_i$,
which is $\varepsilon$. Let $1\leq j,\ell\leq k$. If $%
gf_jAf_j^{-1}g^{-1}=gf_\ell Af_\ell ^{-1}g^{-1}$, then $f_j Af_j
^{-1}=f_\ell Af_\ell ^{-1}$, so $j=\ell$. Thus each $gf_j A f_j^{-1} g^{-1}$
is distinct, and $\left| \{gf_j A f_j^{-1} g^{-1}: 1 \leq j \leq k\}
\right|=k=\left| {\binom{gBg^{-1} }{A}} \right|$, so $\{gf_j A f_j^{-1}
g^{-1}: 1 \leq j \leq k\}={\binom{gBg^{-1} }{A}}$, and ${\binom{gBg^{-1} }{A}%
}$ has oscillation at most $\varepsilon$ under $\gamma$, so we can let $%
B^{\prime -1}$.
\end{proof}

Now to make precise our bound of $64
\varepsilon^{-2}\max\{\log(2k),\log(6\lceil\varepsilon^{-1}\rceil)\}$, we
must bound $k$. If $A^{\prime }\in {\binom{B }{A}}$, and $\phi \in \mathrm{%
Aut}(B)$, then $\phi(A^{\prime })$ will still be an embedded copy of $A$,
and it is easy to see that $\phi: A^{\prime }\mapsto \phi(A^{\prime })$
defines a group action of $\mathrm{Aut}(B)$ on ${\binom{B }{A}}$. For any $%
A^{\prime }\in {\binom{B }{A}}$, by the Skolem-Noether theorem there is some
automorphism $\phi \in \mathrm{Aut}(B)$ such that $\phi(A \otimes
1)=A^{\prime }$, so this action is transitive, and thus $k=\left| {\binom{B 
}{A}} \right|=\frac{\left| \mathrm{Aut}(B) \right|}{\left| \mathrm{Stab}(A
\otimes 1) \right|} \leq \left| \mathrm{Aut}(B) \right|$. Thus we calculate $%
\mathrm{Aut}(B)$.

\begin{thm}
$\mathrm{Aut}(M_n(\mathbb{F}_q))\cong SL_n(\mathbb{F}_q)$
\end{thm}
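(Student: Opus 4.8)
The plan is to identify $\mathrm{Aut}(M_n(\mathbb{F}_q))$ with the group of inner automorphisms and then compute that group explicitly. The key structural input is the Skolem--Noether theorem, already invoked in Section~\ref{sec:class}: since $M_n(\mathbb{F}_q)$ is a finite-dimensional central simple $\mathbb{F}_q$-algebra, every $\mathbb{F}_q$-algebra automorphism of $M_n(\mathbb{F}_q)$ is inner, i.e.\ of the form $x \mapsto uxu^{-1}$ for some $u \in GL_n(\mathbb{F}_q)$. (One should note that the relevant automorphisms here are automorphisms as metric $\mathbb{F}_q$-algebras, but any $\mathbb{F}_q$-algebra automorphism is automatically rank-preserving, since rank is the unique $\mathbb{F}_q$-algebra-theoretic notion of ``dimension of the image'' — so the metric imposes no extra restriction, and $\mathrm{Aut}(M_n(\mathbb{F}_q))$ coincides with the group of $\mathbb{F}_q$-algebra automorphisms.)

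First I would define the map $\pi: GL_n(\mathbb{F}_q) \to \mathrm{Aut}(M_n(\mathbb{F}_q))$ sending $u$ to conjugation by $u$; this is a group homomorphism, and by Skolem--Noether it is surjective. Next I would compute its kernel: $u$ acts trivially iff $u$ is central in $M_n(\mathbb{F}_q)$, iff $u$ is a nonzero scalar matrix, so $\ker \pi = \mathbb{F}_q^\times \cdot 1_n \cong \mathbb{F}_q^\times$. This gives $\mathrm{Aut}(M_n(\mathbb{F}_q)) \cong GL_n(\mathbb{F}_q)/\mathbb{F}_q^\times = PGL_n(\mathbb{F}_q)$.

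The remaining point — and the only place where something slightly delicate happens — is to exhibit an isomorphism $PGL_n(\mathbb{F}_q) \cong SL_n(\mathbb{F}_q)$, which is \emph{not} true for general fields but holds here. I would argue as follows: consider the composite $SL_n(\mathbb{F}_q) \hookrightarrow GL_n(\mathbb{F}_q) \twoheadrightarrow PGL_n(\mathbb{F}_q)$. Its kernel is $SL_n(\mathbb{F}_q) \cap \mathbb{F}_q^\times 1_n = \{\lambda 1_n : \lambda^n = 1\} = \mu_n(\mathbb{F}_q)$, the group of $n$-th roots of unity in $\mathbb{F}_q$. Its image has index in $PGL_n(\mathbb{F}_q)$ equal to $[GL_n(\mathbb{F}_q) : SL_n(\mathbb{F}_q) \cdot \mathbb{F}_q^\times] = |\mathbb{F}_q^\times| / |(\mathbb{F}_q^\times)^n|$, where the second equality uses the determinant, under which $SL_n$ is the kernel and $\mathbb{F}_q^\times 1_n$ maps onto $(\mathbb{F}_q^\times)^n$. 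Since $\mathbb{F}_q^\times$ is cyclic of order $q-1$, the map $\lambda \mapsto \lambda^n$ has kernel $\mu_n(\mathbb{F}_q)$ of size $\gcd(n, q-1)$ and image of index $\gcd(n,q-1)$; hence both the kernel and the cokernel of $SL_n(\mathbb{F}_q) \to PGL_n(\mathbb{F}_q)$ have the same size $\gcd(n,q-1)$, so the two groups have the same (finite) order. To promote this cardinality coincidence to an actual isomorphism, I would produce a section: pick a generator $\zeta$ of $\mathbb{F}_q^\times$; given $[u] \in PGL_n(\mathbb{F}_q)$, scale $u$ by the appropriate scalar so that its determinant becomes $1$ — this is possible precisely because every element of $\mathbb{F}_q^\times$ of the form $\det(u)$ can have its value adjusted by $\det(\lambda 1_n) = \lambda^n$, and one checks that each coset $[u]$ contains a matrix of determinant $1$ iff $\det(u) \in (\mathbb{F}_q^\times)^n$, which need not hold for all $u$.

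So in fact the cleaner route, which I would take in the writeup, is to observe that the statement $\mathrm{Aut}(M_n(\mathbb{F}_q)) \cong SL_n(\mathbb{F}_q)$ should be read up to abstract group isomorphism of finite groups, and it is cleanest to argue: $\mathrm{Aut}(M_n(\mathbb{F}_q)) \cong PGL_n(\mathbb{F}_q)$ by the kernel computation above, and then $|PGL_n(\mathbb{F}_q)| = |GL_n(\mathbb{F}_q)|/(q-1) = |SL_n(\mathbb{F}_q)|$ directly from the order formula $|GL_n(\mathbb{F}_q)| = (q-1)\,|SL_n(\mathbb{F}_q)|$. The main obstacle is then the final identification $PGL_n(\mathbb{F}_q) \cong SL_n(\mathbb{F}_q)$ as groups: this requires either citing the classical fact (valid over finite fields because $H^1$ vanishing / surjectivity of the norm-type map forces $PGL_n \cong PSL_n$ only up to the $\gcd(n,q-1)$ discrepancy) or, more honestly, noting that the two are isomorphic via the commutative-diagram chase $SL_n \to PGL_n$ whose kernel $\mu_n$ and cokernel both have order $\gcd(n,q-1)$ — and I would flag that the genuinely correct general statement is $\mathrm{Aut}(M_n(\mathbb{F}_q)) \cong PGL_n(\mathbb{F}_q)$, with $PGL_n(\mathbb{F}_q) \cong SL_n(\mathbb{F}_q)$ holding as an equality of orders and as an isomorphism when $\gcd(n,q-1)=1$, the coincidence of orders being all that is used in the bound on $k$ in Section~\ref{sec:ramsey}.
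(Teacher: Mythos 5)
Your approach (Skolem--Noether gives surjectivity of $GL_n(\mathbb{F}_q)\to\mathrm{Aut}(M_n(\mathbb{F}_q))$, kernel is the scalar matrices, hence $\mathrm{Aut}(M_n(\mathbb{F}_q))\cong PGL_n(\mathbb{F}_q)$) is exactly the paper's. Where you diverge is in scrutinizing the final step, and you are right to do so: the paper concludes $GL_n(\mathbb{F}_q)/GL_1(\mathbb{F}_q)\cong SL_n(\mathbb{F}_q)$ as if this were automatic, but as you point out this is a genuine error. The two groups $PGL_n(\mathbb{F}_q)$ and $SL_n(\mathbb{F}_q)$ have equal order $|GL_n(\mathbb{F}_q)|/(q-1)$, but they are not isomorphic whenever $d=\gcd(n,q-1)>1$: the center of $SL_n(\mathbb{F}_q)$ is $\mu_n(\mathbb{F}_q)$ of order $d$, while $PGL_n(\mathbb{F}_q)$ has trivial center for $n\geq 2$. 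A concrete counterexample is $n=2$, $q=3$, where $PGL_2(\mathbb{F}_3)\cong S_4$ has trivial center but $SL_2(\mathbb{F}_3)$ (the binary tetrahedral group) has center of order $2$; both have order $24$. So the theorem as stated is false, the correct statement is $\mathrm{Aut}(M_n(\mathbb{F}_q))\cong PGL_n(\mathbb{F}_q)$, and the assertion involving $SL_n(\mathbb{F}_q)$ survives only as an equality of orders. You also correctly note that the order equality is the only thing used downstream in the bound on $k$ in Section~\ref{sec:ramsey}, so the paper's quantitative conclusions are unaffected.

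One small quibble with your writeup: your intermediate paragraph first says the identification $PGL_n\cong SL_n$ ``is not true for general fields but holds here'' before you reverse course and realize it does not in fact hold over $\mathbb{F}_q$ in general. Since the writeup is meant to be a proof rather than a narrated search, you should cut the false start and go directly to the corrected statement $\mathrm{Aut}(M_n(\mathbb{F}_q))\cong PGL_n(\mathbb{F}_q)$ together with the order computation $|PGL_n(\mathbb{F}_q)|=|SL_n(\mathbb{F}_q)|$.
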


\begin{proof}
Define a map $\phi: GL_n(\mathbb{F}_q) \to \mathrm{Aut}(M_n(\mathbb{F}_q))$
given by $\phi(g):x \mapsto gxg^{-1}$. Not only is this a group
homomorphism, but it is onto, as the Skolem-Noether theorem guarantees that
all automorphisms of $M_n(\mathbb{F}_q)$ are inner. Thus it suffices to
determine the kernel of the map. $\ker \phi$ is exactly the center of $GL_n(%
\mathbb{F}_q)$, which is just the scalar multiples of the identity. Thus $%
\mathrm{Aut}(M_n(\mathbb{F}_q))\cong GL_n(\mathbb{F}_q)/GL_1(\mathbb{F}_q)
\cong SL_n(\mathbb{F}_q)$.
\end{proof}

As $\left| SL_n(\mathbb{F}_q) \right|=\frac{1}{q-1}\prod_{i=0}^{n-1}(q^n-q^i)
$\cite{sln}, $k \leq q^{b^2}$, so our Ramsey bound can be written as 
\begin{equation*}
64\varepsilon^{-2}(\log(2)+\mathrm{max}(b^2\log(q),\log(6\lceil
\varepsilon^{-1} \rceil))
\end{equation*}
which is remarkably only quadratically dependent in $b$, and only slightly
worse than quadratic in $\varepsilon^{-1}$.

\providecommand{\bysame}{\leavevmode\hbox to3em{\hrulefill}\thinspace} %
\providecommand{\MR}{\relax\ifhmode\unskip\space\fi MR } 
\providecommand{\MRhref}[2]{  \href{http://www.ams.org/mathscinet-getitem?mr=#1}{#2}
} \providecommand{\href}[2]{#2}

\end{document}